\newcommand{\Ab}	{\operatorname{Ab}}
\newcommand{\Ch}        {\operatorname{Ch}}
\newcommand{\Ext}       {\operatorname{Ext}}
\newcommand{\Hom}       {\operatorname{Hom}}
\newcommand{\cok}       {\operatorname{cok}}
\newcommand{\img}       {\operatorname{image}}
\newcommand{\al}        {\alpha}
\newcommand{\bt}        {\beta} 
\newcommand{\dl}        {\delta}
\newcommand{\ep}        {\epsilon}
\newcommand{\tht}       {\theta}
\newcommand{\Gm}        {\Gamma}
\newcommand{\Sg}        {\Sigma}
\newcommand{\CI}        {{\mathcal{I}}}
\newcommand{\CJ}        {{\mathcal{J}}}
\newcommand{\tg}        {\tilde{g}}
\newcommand{\ti}        {\tilde{\imath}}
\newcommand{\tk}        {\tilde{k}}
\newcommand{\tn}        {\tilde{n}}
\newcommand{\tp}        {\tilde{p}}
\newcommand{\tq}        {\tilde{q}}
\newcommand{\Z}         {{\mathbb{Z}}}
\newcommand{\ov}[1]     {\overline{#1}}
\newcommand{\sm}        {\setminus}
\newcommand{\xla}       {\xleftarrow}
\newcommand{\xra}       {\xrightarrow}
\newcommand{\CA}        {{\mathcal{A}}}
\newcommand{\bsm}       {\left[\begin{smallmatrix}}
\newcommand{\esm}       {\end{smallmatrix}\right]}
\newcommand{\op}        {\oplus}
\newcommand{\ot}        {\otimes}
\newcommand{\st}        {\;|\;}
\renewcommand{\:}{\colon}
\newtheorem{theorem}{Theorem}
\newtheorem{lemma}[theorem]{Lemma}
\newtheorem{proposition}[theorem]{Proposition}
\newtheorem{corollary}[theorem]{Corollary}
\theoremstyle{definition}
\newtheorem{remark}[theorem]{Remark}
\newtheorem{definition}[theorem]{Definition}
\newtheorem{construction}[theorem]{Construction}
\begin{document}
\title{The model structure for chain complexes}
\author{N.~P.~Strickland}

\maketitle 

Let $\Ch$ be the category of (possibly unbounded) chain complexes of
abelian groups.  In this note we construct the standard Quillen model
structure on $\Ch$, by a method that is somewhat different from the
standard one.  Essentially, we use a functorial two-stage projective
resolution for abelian groups, and build everything directly from
that.  This has the advantage of being very concrete, explicit and
functorial.  It does not rely on the small object argument, or make
any explicit use of transfinite induction.  On the other hand, it is
not so conceptual, and it does use the fact that subgroups of free
abelian groups are free, so it does not generalise to many rings other
than $\Z$ (but see Remark~\ref{rem-ChR}).  We do not claim any great
technical benefit for this approach, but it seems like an interesting
alternative, and may be pedagogically useful.

We refer to~\cite{ho:mc} for the general theory of model categories.

\begin{definition}\label{defn-main}
 Consider a map $f\:A_*\to B_*$ in $\Ch$.  We say
 that: 
 \begin{itemize}
  \item $f$ is a cofibration iff it is injective, and
   $\cok(f)_n$ is a free abelian group for all $n$.
  \item $f$ is a fibration iff it is surjective.
  \item $f$ is a weak equivalence iff it is a
   quasiisomorphism, which means that $f_*\:H_*A\to H_*B$ is an
   isomorphism.
  \item $f$ is an acyclic cofibration iff it is a cofibration and a
   weak equivalence, or equivalently a monomorphism whose cokernel is
   acyclic and free in each degree.
  \item $f$ is an acyclic fibration iff it is a fibration and a weak
   equivalence, or equivalently an epimorphism with acyclic kernel.
 \end{itemize}
\end{definition}

\begin{theorem}\label{thm-main}
 This defines a cofibrantly generated model structure on $\Ch$, which
 is left and right proper and also monoidal.
\end{theorem}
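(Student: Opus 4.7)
The plan is to verify the model category axioms in turn, then properness and the monoidal structure. Bicompleteness of $\Ch$ is standard (limits and colimits are computed degreewise), the 2-out-of-3 property is immediate from functoriality of $H_*$, and stability of each of the three distinguished classes under retracts is a routine diagram chase (for cofibrations one uses that retracts of free abelian groups are free, i.e.\ projective).

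The heart of the argument is the construction of the two functorial factorizations. I would begin by fixing the canonical two-stage free resolution of every abelian group $M$: set $P_0 M = \Z^{(M)}$, the free abelian group on the underlying set of $M$, with its tautological surjection onto $M$, and let $P_1 M$ be the kernel. Since $P_1 M$ is a subgroup of a free abelian group, it is itself free. Applied degreewise to a chain complex $A$, this yields a natural free resolution $Q(A)\to A$, whose underlying graded group is $P_0 A_n \op P_1 A_{n-1}$ in degree $n$; the differential is assembled from $d_A$ and the resolution maps, and one checks directly that $Q(A)\to A$ is a surjection with acyclic kernel. Given $f\:A\to B$, the (cofibration, acyclic fibration) factorization is then obtained by gluing $Q(B)$ onto $A$ along $f$ in a mapping-cylinder fashion, and the (acyclic cofibration, fibration) factorization by adjoining enough acyclic ``disk'' complexes $D^n$ to $A$ to surject onto whatever $f$ misses in $B$. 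The main obstacle is getting these formulas exactly right and verifying that the acyclic fibration produced really has acyclic kernel; this is precisely where the freeness of $P_1 M$ (the hereditary property of $\Z$ flagged in the introduction) is essential.

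The lifting axiom follows from the explicit form of Definition~\ref{defn-main}: a cofibration is a degreewise split inclusion with free cokernel, so right lifting against an acyclic fibration reduces to the projectivity of free abelian groups together with the fact that an acyclic chain complex of free abelian groups splits as a direct sum of disk complexes $D^n$. For cofibrant generation I would exhibit the standard sets
\[
  I = \{\, S^{n-1} \hookrightarrow D^n \st n \in \Z \,\}, \qquad
  J = \{\, 0 \hookrightarrow D^n \st n \in \Z \,\},
\]
and check directly that the classes with the right lifting property against $I$ and $J$ are exactly the acyclic fibrations and fibrations respectively; together with the functorial factorizations above this gives cofibrant generation without invoking the small object argument.

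Properness and monoidality are the coda. Right properness is immediate from the long exact sequence in homology associated to a short exact sequence of complexes, since fibrations are epimorphisms. Left properness uses that cofibrations are degreewise split injections, so pushout along a cofibration is exact in each degree and therefore preserves quasi-isomorphisms. For the monoidal structure, I would verify the pushout-product axiom on the generating sets $I$ and $J$, reducing it to the observation that the pushout-product of $S^{m-1}\hookrightarrow D^m$ with $S^{n-1}\hookrightarrow D^n$ embeds as a degreewise free subcomplex of $D^m\ot D^n$, acyclic whenever either factor is; the unit $\Z$ (in degree $0$) is manifestly cofibrant.
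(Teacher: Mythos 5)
Your outline follows the paper's strategy closely---a functorial two-stage free resolution, explicit factorizations built from it, lifting via projectivity plus splitting results for degreewise-free complexes, the standard generating sets, and the five lemma for properness---but two of the formulas at the heart of the factorization step fail as written. First, the functor $P_0M=\Z^{(M)}$ is nonadditive and does \emph{not} preserve zero morphisms ($P_0$ of the zero map sends each generator $[a]$ to $[0]$, and $P_0(0)=\Z\neq 0$), so applying it degreewise to a chain complex does not produce a chain complex: $P_0(d)\circ P_0(d)=P_0(d\circ d)=P_0(0)\neq 0$, and this defect propagates into any differential you try to put on $Q(A)$. This is precisely why the paper resolves by the augmentation ideal $I(A)=\ker(\Z[A]\to\Z)$ and its kernel $I^2(A)=\ker(\theta\colon I(A)\to A)$, which are still free and functorial but do annihilate zero maps; your $P_0,P_1$ must be replaced by these (or some equivalent reduction) before anything else works.

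Second, ``gluing $Q(B)$ onto $A$ along $f$'' does not yield the (cofibration, acyclic fibration) factorization. The kernel of $A\oplus Q(B)\to B$, $(a,x)\mapsto f(a)+p(x)$, is the pullback of $f$ and $p$ and sits in a short exact sequence with kernel $\ker(p)$ and quotient $A$, so its homology is $H_*A$, not zero. Passing to the honest mapping cylinder fixes the kernel but breaks the cofibration: the cokernel of $A\to\operatorname{Cyl}(f)$ contains a copy of $\Sigma A$, which need not be degreewise free. The actual content of the paper's Construction~\ref{cons-cof-afb} is that one must adjoin two-stage free resolutions of \emph{both} $\Sigma A$ and $B$ (the summands $\Sigma I(A_*)\oplus\Sigma^2I^2(A_*)$ and $I(B_*)\oplus\Sigma I^2(B_*)$) and then check that the resulting kernel is an extension of the cone on $A_*$ by contractible pieces; your sketch omits the $A$-side resolution entirely, and the factorization does not exist without it. One further caveat on the coda: reducing the pushout-product axiom to the generating sets is a genuinely different route from the paper's, and it is not free of charge here---it needs either the cell-complex description of cofibrations (i.e.\ the small object argument the paper is deliberately avoiding) or the two-variable adjunction with the internal hom combined with the LLP/RLP characterizations; the paper instead argues directly from the degreewise splittings $B_*=A_*\oplus U_*$ and $D_*=C_*\oplus V_*$, which is shorter and self-contained.
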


The rest of this document will constitute the proof.

\begin{definition}
 For any abelian group $A$, we let $I(A)$ be the kernel of the
 augmentation $\ep\:\Z[A]\to\Z$.  Let $\tht\:\Z[A]\to A$ be the usual
 map given by $\tht(\sum_in_i[a_i])=\sum_in_ia_i$, and put
 $I^2(A)=\ker(\tht\:I(A)\to A)$.  Note that $I$ and $I^2$ give
 functors $\Ab\to\Ab$ that are nonadditive, but they do preserve zero
 morphisms, so they induce functors $\Ch\to\Ch$.  Note also that
 $I(A)$ is freely generated by elements $[a]-[0]$ for $a\in A\sm 0$.
 As subgroups of free abelian groups are always free we see that
 $I^2(A)$ is also free, although in this case there is no obvious
 choice of basis.  
\end{definition}

\begin{remark}
 It is well known and not hard to check that $I^2(A)$ is the square of
 the ideal $I(A)$ in the ring $\Z[A]$, but we will not need this.
\end{remark}

\begin{construction}\label{cons-acf-fib}
 Let $f\:A_*\to B_*$ be a chain map.  Define a graded group
 $W_*=W_*(f)$ by  
 \[ W_* = A_*\op I(B_*)\op \Sg^{-1}I(B_*). \]
 We write $a,\bt$ and $\bt'$ for typical elements of the groups $A_n$,
 $I(B_n)$ and $I(B_{n+1})$.  We define $d^W\:W_n\to W_{n-1}$ by
 \[ d^W(a+\bt+\Sg^{-1}\bt') = da + d\bt - \Sg^{-1}(\bt+d\bt').
 \]
 We also let $j\:A_*\to W_*$ be the inclusion, and define
 $p\:W_*\to B_*$ by $p(a+\bt+\Sg^{-1}\bt')=f(a)+\tht(\bt)$.
\end{construction}

\begin{proposition}\label{prop-acf-fib}
 $W_*$ is a chain complex, and $j$ and $p$ are chain maps with $pj=f$.
 Moreover, $j$ is an acyclic cofibration and $p$ is a fibration.
\end{proposition}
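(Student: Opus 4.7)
My plan is to do the routine verifications first, then focus on the one nontrivial point, which is the acyclicity of $\cok(j)$. I would begin by checking $(d^W)^2=0$ via a direct expansion: on the $A_*$-summand the usual $d^2=0$ applies, on $I(B_*)$ the $d^2=0$ contribution likewise vanishes, and the two terms of the form $\pm\Sg^{-1}d\bt$ that appear when $d^W$ is applied twice (one from $d^W$ hitting $d\bt\in I(B_{n-1})$, one from $d^W$ hitting $-\Sg^{-1}\bt$) cancel against each other. The chain-map property of $j$ is immediate from the formula for $d^W$, and for $p$ it reduces to the fact that $\tht\:I(B_*)\to B_*$ is a chain map together with the observation that $p$ annihilates the $\Sg^{-1}$-summand. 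The identity $pj=f$ holds by inspection.

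For the cofibration and fibration conditions, $j$ is the inclusion of a direct summand, hence injective, and $\cok(j)_n = I(B_n)\op I(B_{n+1})$ is free in each degree because $I$ lands in free abelian groups, as noted in the preceding definition. The map $p$ is surjective because for any $b\in B_n$ we have $[b]-[0]\in I(B_n)$ and $p([b]-[0])=\tht([b]-[0])=b$.

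The real content is that $\cok(j)$ is acyclic, so that $j$ is actually an \emph{acyclic} cofibration. I would prove this by writing down an explicit chain contraction rather than by computing homology. On $\cok(j)_n$, writing a typical element as $\bt+\Sg^{-1}\bt'$ with $\bt\in I(B_n)$ and $\bt'\in I(B_{n+1})$, the induced differential is $\bt+\Sg^{-1}\bt'\mapsto d\bt-\Sg^{-1}(\bt+d\bt')$. Define $h\:\cok(j)_n\to\cok(j)_{n+1}$ by $h(\bt+\Sg^{-1}\bt')=-\bt'$, regarded as an element of the $I(B_{n+1})$-summand of $\cok(j)_{n+1}$. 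A short computation then yields $dh+hd=\operatorname{id}$: the $\bt$-part is recovered from the $-\Sg^{-1}\bt$ term in $hd$, the $\Sg^{-1}\bt'$-part is recovered from $dh$, and the remaining $\pm d\bt'$ contributions cancel. Hence $\cok(j)$ is contractible and in particular acyclic. Conceptually this just records the fact that $\cok(j)$ is (a shift of) the mapping cone of the identity on $I(B_*)$, but writing $h$ down directly is the cleanest route. Combining the pieces, $j$ is a monomorphism with free acyclic cokernel (hence an acyclic cofibration) and $p$ is a surjection (hence a fibration). The only step requiring any care is keeping the signs straight in the contraction $h$.
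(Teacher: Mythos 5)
Your proof is correct and follows essentially the same route as the paper: the paper also identifies $\cok(j)$ as the cone on $\Sg^{-1}I(B_*)$ and concludes contractibility, while you simply make the contracting homotopy $h(\bt+\Sg^{-1}\bt')=-\bt'$ explicit (and your sign check $dh+hd=\operatorname{id}$ is right). The remaining verifications match the paper's.
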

\begin{proof}
 It is a straightforward computation to check that $d^Wd^W=0$, so
 $W_*$ is a chain complex.  It is visible that $d^Wj=jd$, and it is
 again straightforward to check that $pd^W=dp$, so $j$ and $p$ are
 chain maps, and clearly $pj=f$.  The map $j$ is injective, and the
 cokernel is just the cone on $\Sg^{-1}I(B_*)$, which is contractible
 and free in each degree; this means that $j$ is an acyclic
 cofibration.  Also, as $\tht$ is surjective we see that $p$ is an
 epimorphism and thus a fibration.
\end{proof}

\begin{construction}\label{cons-cof-afb}
 Let $f\:A_*\to B_*$ be a chain map.  Define a graded group
 $X_*=X_*(f)$ by  
 \[ X_* = A_*\op\Sg I(A_*)\op \Sg^2I^2(A_*) \op 
                   I(B_*) \op \Sg I(B_*).
 \]
 We write $a,\al',\al'',\bt$ and $\bt'$ for typical elements of the
 groups $A_n$, $I(A_{n-1})$, $I^2(A_{n-2})$, $I(B_n)$ and
 $I^2(B_{n-1})$.  We define $d^X\:X_n\to X_{n-1}$ by 
 \begin{align*}
  d^X(a) &= da & \in & A_{n-1} \\
  d^X(\Sg\al') &= \tht(\al') -\Sg d\al' - f_*(\al') 
    & \in & A_{n-1} \op \Sg I(A_{n-2}) \op I(B_{n-1}) \\
  d^X(\Sg^2\al'') &= \Sg\al'' + \Sg^2 d\al'' + \Sg f_*\al'' 
    & \in & \Sg I(A_{n-2}) \op \Sg^2(I^2(A_{n-3})) \op \Sg
    I^2(B_{n-2}) \\
  d^X(\bt) &= d\bt & \in & I(B_{n-1}) \\
  d^X(\Sg\bt') &= \bt' - \Sg d\bt'
    & \in & I(B_{n-1}) \op I^2(B_{n-2}). 
 \end{align*}
 We also define maps $A_*\xra{i}X_*\xra{p}B_*$ by $i(a)=a$ and 
 \[ p(a + \Sg\al' + \Sg^2\al'' + \bt + \Sg\bt') = f(a) + \tht(b).
 \]
 It is clear that $i$ is injective, the cokernel of $i$ is free in
 each degree, $p$ is surjective, and $pi=f$.
\end{construction}

\begin{proposition}\label{prop-cof-afb-i}
 $(X_*,d^X)$ is a chain complex, and $i$ and $p$ are chain maps.
\end{proposition}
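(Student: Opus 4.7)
The plan is a direct case analysis: verify $d^Xd^X=0$ on each of the five generator types $a$, $\Sg\al'$, $\Sg^2\al''$, $\bt$, $\Sg\bt'$, and separately check that $i$ and $p$ commute with differentials. The ingredients are all general naturality facts about the $\Z[-]$ construction: $\tht\:I(A_*)\to A_*$ and $f_*\:I(A_*)\to I(B_*)$ are chain maps (since $I$, though nonadditive, still preserves zero morphisms and composition), and they fit into the naturality square $f\tht_A=\tht_Bf_*$. Crucially, $I^2(A_*)=\ker(\tht\:I(A_*)\to A_*)$, and similarly for $B$. I would record all of this before beginning the calculation.

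For $d^Xd^X=0$, the cases $a\in A_n$ and $\bt\in I(B_n)$ are immediate from $d^2=0$. The cases $\Sg\al'$ and $\Sg\bt'$ are routine expansions in which each leftover term cancels another using that $\tht$ and $f_*$ are chain maps. The only non-formal case is $\Sg^2\al''$ with $\al''\in I^2(A_{n-2})$: after expanding and cancelling all other contributions by naturality and $d^2=0$, one is left with a single residual summand $\tht(\al'')\in A_{n-2}$. This vanishes exactly because $\al''\in\ker\tht$, which is the whole point of using $I^2$ rather than $I$ in this summand. I expect this to be the main (and essentially only) step worth writing out in full detail.

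The chain map conditions are then a short check. $d^Xi=id^A$ is visible from $i(a)=a$ and $d^X|_{A_*}=d^A$. For $p$, the three summands of type $\Sg\al'$, $\Sg^2\al''$, $\Sg\bt'$ all map to $0$ under $p$, so one needs $pd^X$ to vanish on them: the $\Sg\al'$ case reduces to $f\tht(\al')-\tht f_*(\al')=0$, which is the naturality square; the $\Sg^2\al''$ case is immediate since each of the three summands of $d^X(\Sg^2\al'')$ again maps to $0$; and the $\Sg\bt'$ case leaves $\tht(\bt')$, which vanishes because $\bt'\in I^2(B_{n-1})=\ker\tht$. On the remaining $A_*$ and $I(B_*)$ summands the claim reduces to $f$ and $\tht$ being chain maps, giving $pd^X(a)=fda=dfa=dp(a)$ and $pd^X(\bt)=\tht(d\bt)=d\tht(\bt)=dp(\bt)$.
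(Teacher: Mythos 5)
Your proposal is correct and follows essentially the same route as the paper: a term-by-term verification of $d^Xd^X=0$ and of the chain-map conditions, with the only non-formal cancellations being $\tht(\al'')=0$ and $\tht(\bt')=0$ from the definition of $I^2$ as $\ker\tht$, plus the naturality square $f\tht=\tht f_*$. Nothing is missing.
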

\begin{proof}
 This is a straightforward check of definitions.  We have
 \begin{align*}
  d^Xd^X(a) &= d^2(a) = 0 \\
  d^Xd^X(\Sg\al') &= d^X(\tht(\al')-\Sg d\al' - f_*(\al')) \\
   &= d\tht(\al') - (\tht(d\al')-\Sg d^2\al' - f_*(d\al'))
       - df_*(\al') = 0 \\
  d^Xd^X(\Sg^2\al'') &= d^X(\Sg\al'' + \Sg^2 d\al'' + \Sg f_*(\al'')) \\
   &= (\tht(\al'') - \Sg d\al'' - f_*(\al'')) + 
      (\Sg d\al'' + \Sg^2 d^2\al'' + \Sg f_*(d\al'')) +
      (f_*(\al'') - \Sg d f_*(\al'')) \\
   &= \tht(\al'') = 0 \\
  d^Xd^X(\bt) &= d^2\bt = 0 \\
  d^Xd^X(\Sg\bt') &= d^X(\bt' - \Sg d\bt') \\
   &= d\bt' - (d\bt' - \Sg d^2\bt') = 0.
 \end{align*}
 This shows that $X_*$ is a chain complex, and it is immediate that
 $i$ is a chain map.  We also have
 \begin{align*}
  p(d^X(a)) &= f(da) = df(a) \\
  p(d^X(\Sg\al')) &= p(\tht(\al')-\Sg d\al' - f_*(\al')) 
   = f(\tht(\al'))-\tht(f_*(\al')) = 0 \\
  p(d^X(\Sg^2\al'')) &=
   p(\Sg\al'' + \Sg^2 d\al'' + \Sg f_*(\al'')) = 0 \\
  p(d^X(\bt)) &= p(d(\bt)) = \tht(d(\bt)) = d\tht(\bt) \\
  p(d^X(\Sg\bt')) &= p(\bt'-\Sg d\bt') = \tht(\bt') = 0.
 \end{align*}
 This is easily seen to agree with $dp$, so $p$ is also a chain map.
\end{proof}

\begin{proposition}\label{prop-cof-afb-ii}
 The map $p\:X_*\to B_*$ is a quasiisomorphism (and thus an acyclic
 fibration).
\end{proposition}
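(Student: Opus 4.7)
The plan is to split $X_*$ via the sub-complex $Y_* := I(B_*) \op \Sg I^2(B_*)$. A direct inspection of the formulas in Construction~\ref{cons-cof-afb} shows that $d^X$ preserves $Y_*$ (the $\bt$ and $\Sg\bt'$ components do not leak back into the $A$-summands), so we have a short exact sequence $0 \to Y_* \to X_* \to Q_* \to 0$ in $\Ch$, where $Q_* = A_* \op \Sg I(A_*) \op \Sg^2 I^2(A_*)$ inherits its differential by dropping all terms landing in $I(B)$ or $\Sg I^2(B)$. The restriction $q := p|_{Y_*}$ is the map $q(\bt+\Sg\bt') = \tht(\bt)$. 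Since $q$ factors $p$ through the inclusion $Y_* \hookrightarrow X_*$, it will suffice to show that $q$ is a quasiisomorphism and that $Q_*$ is acyclic; two-out-of-three then gives that $p$ is a quasiisomorphism.

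For the first point, note that $Y_*$ is (up to signs) the mapping cone of the monomorphism $I^2(B_*) \hookrightarrow I(B_*)$, whose cokernel is $B_*$ via $\tht$. The standard mapping-cone calculation then shows that $q$ is a quasiisomorphism; equivalently, one can chase the long exact sequences of $0 \to I^2(B_*) \to I(B_*) \xra{\tht} B_* \to 0$ and $0 \to I(B_*) \to Y_* \to \Sg I^2(B_*) \to 0$.

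The crux is showing that $Q_*$ is acyclic. I would do this by writing down an explicit contracting homotopy $h \: Q_n \to Q_{n+1}$:
\[ h(a) = \Sg([a]-[0]),\qquad h(\Sg\al') = \Sg^2\phi(\al'),\qquad h(\Sg^2\al'') = 0, \]
where $\phi(\al') := \al' - ([\tht(\al')]-[0])$. The formula for $\phi$ is forced by computing $d^Qh(\Sg\al') + hd^Q(\Sg\al')$: requiring this to equal $\Sg\al'$ forces $\phi(\al') + ([\tht(\al')]-[0]) = \al'$. Checking that $\phi$ actually lands in $I^2(A_*)$ (so $h(\Sg\al')$ lies in the correct summand) and that $\phi$ commutes with $d$ (so as to cancel the $\Sg^2$-component in the same computation) is immediate from the formula, using $\tht([a]-[0]) = a$ and the fact that $\tht$ commutes with $d$. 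The main obstacle is guessing this contraction; granted it, verifying $d^Q h + hd^Q = \mathrm{id}$ on each summand and concluding from the long exact sequence of $0 \to Y_* \to X_* \to Q_* \to 0$ is a routine coordinate computation in the style of Proposition~\ref{prop-cof-afb-i}.
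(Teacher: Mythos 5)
Your proof is correct, but it organises the computation differently from the paper. You split off the $B$-part $Y_*=I(B_*)\op\Sg I^2(B_*)$ as a subcomplex, identify it (as the cone of $I^2(B_*)\hookrightarrow I(B_*)$, whose cokernel under $\tht$ is $B_*$) with a complex quasiisomorphic to $B_*$, and then kill the quotient $Q_*=A_*\op\Sg I(A_*)\op\Sg^2I^2(A_*)$ with an explicit contracting homotopy; two-out-of-three finishes the job. I checked your homotopy: $\phi(\al')=\al'-([\tht\al']-[0])$ does land in $I^2$, commutes with $d$, and restricts to the identity on $I^2$, so $d^Qh+hd^Q=1$ holds on all three summands. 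The paper instead reduces to showing that $K_*=\ker(p)$ is acyclic and filters it by subcomplexes $Z_*\leq Y_*\leq K_*$ built from the $I^2$-summands, checking that $Z_*$ and $Y_*/Z_*$ are cones and that $K_*/Y_*$ is isomorphic to the cone on $A_*$ after identifying $X_*/Y_*$ with $A_*\op\Sg A_*\op B_*$. (Note the paper's $Y_*$ is a different subcomplex from yours, despite the clashing name.) Each route has its advantages: the paper's filtration proves directly that the kernel of $p$ is acyclic, which is literally the ``acyclic fibration'' condition of Definition~\ref{defn-main}, and avoids having to guess a homotopy; your decomposition is arguably more transparent structurally --- your $Y_*$ is exactly $\Gm(B)_*$, so you get an independent proof that $\Gm(B)_*\to B_*$ is a surjective quasiisomorphism rather than deducing it as the $A_*=0$ special case, and your splitting reflects the double-complex picture in the remark following the definition of $\Gm$. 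One cosmetic point: the displayed formula for $X_*$ in Construction~\ref{cons-cof-afb} has a typo (the last summand should be $\Sg I^2(B_*)$, as the description of $\bt'$ and the definition of $\Gm(B)_*$ confirm); you silently used the corrected version, which is the right thing to do.
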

\begin{proof}
 Put
 \begin{align*}
  Z_* &= I^2(B_*) \op \Sg I^2(B_*) \\
  Y_* &= \Sg I^2(A_*) \op \Sg^2I^2(A_*) \op Z_* \\
  K_* &= \ker(p\: X_* \to B_*).
 \end{align*}
 Thus, for an element
 $x=a + \Sg\al' + \Sg^2\al'' + \bt + \Sg\bt'\in X_n$ we have
 \begin{align*}
  x \in Z_n &\iff a=0,\;\al'=0,\;\al''=0,\;\tht\bt=0 \\
  x \in Y_n &\iff a=0,\;\tht\al'=0,\;\tht\bt=0 \\
  x \in K_n &\iff fa+\tht\bt=0.
 \end{align*}
 Note that $Z_*\leq Y_*\leq K_*$, and these are inclusions of
 subcomplexes.  As $p$ is surjective, it will suffice to prove that
 $K_*$ is acyclic.  The complexes $Z_*$ and $Y_*/Z_*$ are easily seen
 to be contractible, so it will suffice to show that the complex
 $\ov{K}_*=K_*/Y_*$ is acyclic.  Now put 
 \begin{align*}
  \ov{X}_n &= A_n \op \Sg A_{n-1} \op B_n \\
  d^{\ov{X}}(a + \Sg a' + b) &= (da + a') - \Sg da' + (db - f(a')) \\
  \ov{p}(a + \Sg a' + b) &= f(a) + b.
 \end{align*}
 Using the map
 \[ (a + \Sg\al' + \Sg^2\al'' + \bt + \Sg\bt') \mapsto 
    (a + \Sg\tht(\al') + \tht(\bt))
 \]
 we can identify $\ov{X}_*$ with $X_*/Y_*$, and thus $\ov{K}_*$ with
 the kernel of $\ov{p}$.  Let $C_*$ be the cone on $A_*$, so
 $C_*=A_*\op\Sg A_*$ with $d^C(a+\Sg a')=da + a' - \Sg da$.  Define
 $j\:C_*\to\ov{X}_*$ by $j(a+\Sg a')=a+\Sg a'-f(a)$.  This is easily
 seen to be a chain map and a kernel for $\ov{p}$, so $\ov{K}_*$ is
 isomorphic to $C_*$ and is contractible.
\end{proof}

\begin{definition}
 In the case $A_*=0$ we write $\Gm(B)_*$ for $X_*$.  Thus
 $\Gm(B)_n=I(B_n)\op I^2(B_{n-1})$ with
 $d(\bt+\Sg\bt')=d\bt+\bt'-\Sg d\bt'$, and $\Gm(B)_*$ is free in each
 degree, and we have a surjective quasiisomorphism
 $p\:\Gm(B)_*\to B_*$ given by $p(\bt+\Sg\bt')=\tht(\bt)$.
\end{definition}

\begin{remark}
 Another way to think about $X_*$ is as follows.  Write $j$ for the
 inclusion $I^2\to I$.  We have maps of chain complexes
 \[ I^2(A_*) \xra{\bsm j\\ f_*\esm} 
    I(A_*)\op I^2(B_*) 
    \xra{\bsm \tht & 0 \\ -f_* & j \esm}
    A_*\op I(B_*).
 \]
 The composite of these maps is zero, so we can regard the above
 diagram as a double complex.  The totalisation of this double complex
 is $X_*$.
\end{remark}

We now start to discuss lifting properties.  It will be convenient to
introduce some test objects:
\begin{definition}
 Let $M$ be an abelian group.  We write $\Sg^nM$ for the complex
 consisting of a copy of $M$ in degree $n$.  We also write $C^nM$ for
 the complex consisting of two copies of $M$ in degrees $n$ and $n+1$,
 with the differential between them being the identity.  
\end{definition}

\begin{remark}\label{rem-CS-hom}
 There are easy natural isomorphisms
 \begin{align*}
  \Ch(A_*,C^nM) & \simeq \Hom(A_n,M) &
  \Ch(C^nM,A)   & \simeq \Hom(M,A_{n+1}) \\
  \Ch(A_*,\Sg^nM) & \simeq \Hom(A_n/dA_{n+1},M) &
  \Ch(\Sg^nM,A_*) & \simeq \Hom(M,\ker(d\:A_n\to A_{n-1})).
 \end{align*}
 There is a short exact sequence $\Sg^nM\to C^nM\to\Sg^{n+1}M$;
 applying $\Ch(A_*,-)$ to this gives the left exact sequence
 \[ \Hom(A_n/dA_{n+1},M) \xra{\pi^*} 
    \Hom(A_n,M) \xra{d^*} \Hom(A_{n+1}/dA_{n+2},M).
 \]
\end{remark}

\begin{lemma}\label{lem-free-complex}
 Let $A_*$ be a chain complex that is free in each degree.  Then there
 exists a splitting $A_*=Y_*\op Z_*$ of graded groups and 
 injective maps $d'\:Y_n\to Z_{n-1}$ such that the differential is
 given by $d(y+z)=d'(y)$.  Moreover, $A_*$ is acyclic iff $d'$ is an
 isomorphism iff $A_*$ is contractible.  
\end{lemma}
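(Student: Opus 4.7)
The plan is to build the splitting one degree at a time using cycle subgroups and their complements, then read off the acyclicity and contractibility equivalences.

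In each degree I will take $Z_n = \ker(d\colon A_n \to A_{n-1})$. Since $A_n$ is free and subgroups of free abelian groups are free, $Z_n$ is free; moreover $A_n/Z_n$ is isomorphic via $d$ to $dA_n \subseteq A_{n-1}$, which is again a subgroup of a free abelian group and hence free. The short exact sequence $0 \to Z_n \to A_n \to A_n/Z_n \to 0$ therefore splits, giving a complement $Y_n$ with $A_n = Y_n \op Z_n$. By construction $d$ vanishes on $Z_n$, while $d|_{Y_n}$ lands in $Z_{n-1}$ (since $d^2 = 0$) and is injective (since it is injective on any complement of its kernel); this is the required $d'\colon Y_n \to Z_{n-1}$.

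For the moreover clause, I will observe that $H_n(A_*) = Z_n/dA_{n+1} = Z_n/d'(Y_{n+1})$, so $A_*$ is acyclic iff every $d'$ is surjective, which in combination with the injectivity already established is the same as every $d'$ being an isomorphism. When that holds I will define a chain contraction $s\colon A_n \to A_{n+1}$ by $s(z) = (d')^{-1}(z) \in Y_{n+1}$ for $z \in Z_n$ and $s(y) = 0$ for $y \in Y_n$; a case check on $Y_n$ and $Z_n$ separately gives $ds + sd = 1_{A_*}$. The remaining implication, that contractibility implies acyclicity, is standard.

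No step here is particularly difficult; the only real ingredient is the theorem that subgroups of free abelian groups are free, which is precisely the feature the introduction singles out as restricting the entire approach to $\Z$.
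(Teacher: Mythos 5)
Your proof is correct and follows essentially the same route as the paper: take $Z_n=\ker(d)$, use freeness of subgroups of free abelian groups to split off a complement $Y_n$ mapping isomorphically onto the boundaries, and identify $H_n$ with the cokernel of $d'$. The only cosmetic difference is that you write down the contraction $s$ explicitly where the paper invokes the isomorphism with the cone on $Z_*$; both are fine.
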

\begin{proof}
 Put $Z_n=\ker(d\:A_n\to A_{n-1})$ and $B_n=\img(d\:A_{n+1}\to A_n)$,
 so $B_n\leq Z_n\leq A_n$ and therefore $Z_n$ and $B_n$ are free.  We
 can therefore split the epimorphism $d\:A_n\to B_{n-1}$ and thus
 choose a subgroup $Y_n\leq A_n$ such that $d\:Y_n\to B_{n-1}$ is an
 isomorphism.  If $a\in A_n$ we see that there is a unique
 $y\in Y_n$ with $dy=da$, which means that the element $z=a-y$ lies in
 $Z_n$.  Using this we see that $A_n=Y_n\op Z_n$, and it is clear
 that the differential has the stated form.  Now $H_*A$ is the
 cokernel of $d'$ so $A_*$ is acyclic iff $d'$ is an isomorphism, in
 which case $A_*$ is isomorphic to the cone on $Z_*$ and is
 contractible. 
\end{proof}

\begin{proposition}\label{prop-cofibrant}
 The functor $\Ch(A_*,-)$ converts acyclic fibrations to epimorphisms
 iff $A_*$ is free in each degree.
\end{proposition}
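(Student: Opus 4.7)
The plan is to prove both implications. For the necessity direction, I would use the test complexes $C^nM$: given any surjection $F\twoheadrightarrow M$ with $F$ a free abelian group, the induced map $C^nF\to C^nM$ is surjective in every degree, between two acyclic complexes, hence an acyclic fibration. Applying $\Ch(A_*,-)$ and using the natural isomorphism $\Ch(A_*,C^nM)\simeq\Hom(A_n,M)$ from Remark~\ref{rem-CS-hom}, the hypothesis yields a surjection $\Hom(A_n,F)\twoheadrightarrow\Hom(A_n,M)$. Taking $M=A_n$ and $F$ a free cover of it, a preimage of the identity splits the surjection $F\to A_n$, exhibiting $A_n$ as projective and hence free.

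For the sufficiency direction, assume $A_*$ is free in each degree, let $p\:X_*\to Y_*$ be an acyclic fibration with (acyclic) kernel $K_*$, and let $g\:A_*\to Y_*$. The first step is to apply Lemma~\ref{lem-free-complex} to decompose $A_*=U_*\op V_*$ as graded groups, with $d$ vanishing on $V_*$ and $d|_{U_n}\:U_n\to V_{n-1}$ injective; the summands $U_n$ and $V_n$ are then free. I would construct the lift $\tg$ in two passes. For each basis element $v$ of $V_n$, the element $g(v)$ is a cycle; since $p$ is a surjective quasi-isomorphism, I pick a cycle in $X_n$ whose homology class maps to $[g(v)]$ and then correct by a boundary (using surjectivity of $p$ in degree $n+1$) so that it lands exactly on $g(v)$. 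Linearly extending gives a chain map $\tg|_{V_*}\:V_*\to X_*$ lifting $g|_{V_*}$ whose image consists of cycles. For each basis element $u$ of $U_n$, I pick any preimage $u_0\in X_n$ of $g(u)$; then $e=du_0-\tg(du)$ lies in $K_{n-1}$ (both terms project to $g(du)$) and is itself a cycle there (because $\tg(du)$ is one), so acyclicity of $K_*$ yields $\kappa\in K_n$ with $d\kappa=e$, and setting $\tg(u)=u_0-\kappa$ ensures both $p\tg(u)=g(u)$ and $d\tg(u)=\tg(du)$.

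The main subtlety is to avoid any induction over degrees, which would not work for an unbounded complex. The decomposition $A_*=U_*\op V_*$ sidesteps this entirely: because $V_*$ has zero differential, the lift on $V_*$ can be defined basis-element-by-basis-element independently across all degrees in a single pass, and the subsequent pass on $U_*$ likewise treats each basis element independently, needing only one application of acyclicity of $K_*$ per basis element. The remaining work is a routine verification that $\tg$, extended linearly to $A_*=U_*\op V_*$, is indeed a chain map with $p\tg=g$.
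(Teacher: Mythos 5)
Your proof is correct, and while the necessity half is essentially the paper's argument, the sufficiency half takes a genuinely different (though closely related) route. For necessity, the paper tests against $C^n(f)$ for an arbitrary surjection $f\:M\to N$ rather than only a free cover $F\twoheadrightarrow M$, but the conclusion that $A_n$ is projective and hence free is reached identically. For sufficiency, both arguments start from the splitting of Lemma~\ref{lem-free-complex}, but the paper proceeds by obstruction: it first chooses a degreewise lift $h'$ of $g$ using only freeness and surjectivity of $q$, packages the failure of $h'$ to commute with $d$ as a chain map $k\:A_*\to\Sigma K_*$, proves as a separate claim that every chain map from a degreewise free complex to an acyclic complex is nullhomotopic, and uses the nullhomotopy to correct $h'$ uniformly. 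You instead build the lift directly in two passes: on the cycle summand you use surjectivity of $p_*$ on homology together with a boundary correction (thus invoking the quasi-isomorphism property of $p$ rather than acyclicity of its kernel, which is legitimate since the two descriptions of acyclic fibrations coincide by Definition~\ref{defn-main}), and on the complementary summand you apply acyclicity of $K_*$ once per basis element. Your version is more concrete and avoids formulating the nullhomotopy lemma, at the cost of a case split and of working basis element by basis element; the paper's version isolates a reusable homotopy-theoretic fact (maps to acyclic complexes are nullhomotopic) and keeps the correction step uniform. Both versions correctly avoid induction on degree, which is the essential point for unbounded complexes, by exploiting the same splitting.
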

\begin{proof}
 Suppose that $\Ch(A_*,-)$ converts acyclic fibrations to
 epimorphisms.  For any surjection $f\:M\to N$ of abelian groups we
 have an acyclic fibration $C^n(f)\:C^nM\to C^nN$, so the map
 \[ \Hom(A_n,M) = \Ch(A_*,C^nM) \xra{f_*}
     \Ch(A_*,C^nN) = \Hom(A_n,M)
 \]
 is surjective.  This means that $A_n$ is projective and thus free.

 Conversely, suppose that $A_n$ is free for all $n$, so we can split
 $A_*$ as $Y_*\op Z_*$ as in Lemma~\ref{lem-free-complex}.  We
 first claim that if $K_*$ is an acyclic complex and $k\:A_*\to K_*$
 is a chain map then $k$ is nullhomotopic.  To see this, put
 $ZK_n=\ker(K_n\xra{d}K_{n-1})$; as $K_*$ is acyclic, the map
 $d\:K_{n+1}\to ZK_n$ is surjective.  Now $k$ gives a map
 $Z_n\to ZK_n$ and $Z_n$ is free so we can choose a lift
 $t\:Z_n\to K_{n+1}$ with $dt=k$.  We now have a homomorphism
 $Y_n\to ZK_n$ given by $y\mapsto k(y)-t(d'y)$, so we can choose a
 lift $s\:Y_n\to K_{n+1}$ with $ds(y)=k(y)-t(d'y)$.  We then put
 $r(y+z)=s(y)+t(z)$ and observe that $dr+rd=k$ as required.

 Now consider an acyclic fibration $q\:L_*\to M_*$, so $q$ is
 surjective and the kernel $K_*$ is acyclic.  Let $j\:K_*\to L_*$ be
 the inclusion.  Suppose we have a chain map $g\:A_*\to M_*$.  As
 $A_*$ is degreewise free and $q$ is surjective we can choose a map
 $h'\:A_*\to L_*$ of graded groups with $qh'=g$.  For $a\in A_n$ put
 $k(a)=\Sg(dh'(a)-h'(da))\in(\Sg K)_n$.  We have
 $dk(a)=\Sg dh'd(a)=k(da)$, so $k$ is a chain map $A_*\to\Sg K_*$.
 By the previous paragraph we can choose maps
 $r_n\:A_n\to(\Sg K)_{n+1}$ with $dr+rd=k$.  We then have
 $r(a)=\Sg r'(a)$ say, and $-dr'+r'd=\Sg^{-1}k$.  It follows that the
 map $h=h'+r'\:A_*\to L_*$ is a chain map with $qh=g$, as required.
\end{proof}

\begin{corollary}\label{cor-split-ses}
 Let $A_*\xra{i}B_*\xra{p}C_*$ be a short exact sequence of chain
 complexes in which $A_*$ is acyclic, and $C_*$ is free in each
 degree.  Then the sequence is split (in the category of chain
 complexes, not just in the underlying category of graded abelian
 groups). 
\end{corollary}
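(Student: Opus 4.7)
The plan is to reduce the splitting problem to the lifting property established in Proposition~\ref{prop-cofibrant}. First, I would observe that $p\:B_*\to C_*$ is an acyclic fibration: it is surjective by assumption, and the kernel identifies with $A_*$, which is acyclic. By the equivalent formulation in Definition~\ref{defn-main}, this makes $p$ an acyclic fibration.

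Next, since $C_*$ is free in each degree, Proposition~\ref{prop-cofibrant} tells us that $\Ch(C_*,-)$ carries acyclic fibrations to epimorphisms. Applying this functor to $p$ and chasing the identity $\mathrm{id}_{C_*}\in\Ch(C_*,C_*)$ backwards, I obtain a chain map $s\:C_*\to B_*$ with $ps=\mathrm{id}_{C_*}$.

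Finally, I would produce the complementary retraction $r\:B_*\to A_*$ in the standard way. For $b\in B_n$ the element $b-sp(b)$ lies in $\ker(p)$, so by exactness there is a unique $r(b)\in A_n$ with $i(r(b))=b-sp(b)$. Because $i$ is a chain map and everything in sight commutes with the differential, $r$ is a chain map, and the identity $\mathrm{id}_{B_*}=ir+sp$ then exhibits the sequence as split.

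There is really no serious obstacle here: the entire content of the corollary is packaged inside Proposition~\ref{prop-cofibrant}, and the only thing to check is that $p$ satisfies the hypotheses needed to feed into it. If anything requires a moment of care it is the verification that the retraction $r$ defined via the uniqueness of preimages under $i$ commutes with the differentials, but this is immediate from the fact that both $i$ and $sp$ are chain maps.
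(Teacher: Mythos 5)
Your argument is correct and is essentially identical to the paper's own proof: both identify $p$ as an acyclic fibration and apply Proposition~\ref{prop-cofibrant} to lift the identity of $C_*$ to a section $s$ with $ps=1$. The construction of the retraction $r$ at the end is harmless but unnecessary, since a chain-level section of $p$ already constitutes a splitting of the sequence.
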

\begin{proof}
 Exactness means that $p$ is surjective, and the kernel is $A_*$ which
 is acyclic by assumption, so $p$ is an acyclic fibration.  The
 Proposition tells us that the induced map
 $\Ch(C_*,B_*)\to\Ch(C_*,C_*)$ is surjective.  In particular,
 $1$ is in the image, so we can choose a chain map $s\:C_*\to B_*$
 with $ps=1$, giving the required splitting.
\end{proof}

\begin{proposition}\label{prop-acyclic-cofibrant}
 The functor $\Ch(A_*,-)$ converts all fibrations to epimorphisms iff
 $A_*$ is contractible and free in each degree.
\end{proposition}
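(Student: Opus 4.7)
My plan is to handle the two directions separately, with the ``if'' direction being a short homotopy-transfer argument and the ``only if'' direction being a reduction to Proposition~\ref{prop-cofibrant} plus one extra test that forces acyclicity.

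For $(\Leftarrow)$, suppose $A_*$ is free in each degree with a chain contraction $s\:A_n\to A_{n+1}$ satisfying $ds+sd=1$. Given a fibration $q\:L_*\to M_*$ and a chain map $g\:A_*\to M_*$, the contraction identity together with $gd=dg$ gives $g=d(gs)+(gs)d$. Since each $A_n$ is free and $q$ is surjective, I would lift the graded map $gs\:A_n\to M_{n+1}$ degreewise to $h_n\:A_n\to L_{n+1}$ with $qh=gs$. Then $\tg:=dh+hd$ is automatically a chain map (both $d\tg$ and $\tg d$ equal $dhd$), and $q\tg=d(qh)+(qh)d=d(gs)+(gs)d=g$, giving the required lift.

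For $(\Rightarrow)$, the hypothesis in particular applies to acyclic fibrations, so Proposition~\ref{prop-cofibrant} already forces $A_*$ to be free in each degree. It then suffices to prove $A_*$ is acyclic: combined with freeness, the first paragraph of the proof of Proposition~\ref{prop-cofibrant} (applied to $K_*=A_*$ and $k=1_{A_*}$) produces a chain contraction of the identity. To obtain acyclicity, I would test against the surjective chain map $C^nM\to\Sg^{n+1}M$ that is zero in degree $n$ and the identity in degree $n+1$; this is a fibration but \emph{not} a weak equivalence (its kernel $\Sg^nM$ has nontrivial homology), so it contributes genuinely new information. Via Remark~\ref{rem-CS-hom}, the induced map on $\Ch(A_*,-)$ is the boundary map $d^*\:\Hom(A_n,M)\to\Hom(A_{n+1}/dA_{n+2},M)$. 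Specializing to $M=A_{n+1}/dA_{n+2}$ and lifting $1_M$ through $d^*$ produces $\phi\:A_n\to A_{n+1}/dA_{n+2}$ with $\phi d=1$, so $d\:A_{n+1}/dA_{n+2}\to A_n$ is a split monomorphism. This forces $\ker(d\:A_{n+1}\to A_n)=dA_{n+2}$, hence $H_{n+1}(A_*)=0$, and as $n$ is arbitrary $A_*$ is acyclic.

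The main obstacle is choosing the correct test fibration in $(\Rightarrow)$: acyclic fibrations only recover the freeness already furnished by Proposition~\ref{prop-cofibrant}, so one needs a genuinely non-acyclic fibration whose induced map on $\Ch(A_*,-)$ has a clean description. The pair $C^nM\to\Sg^{n+1}M$ is the natural candidate once one reads Remark~\ref{rem-CS-hom} as saying that the induced map is exactly $d^*$, because surjectivity of $d^*$ for every $M$ is tautologically equivalent to the splitting property that kills $H_{n+1}(A_*)$.
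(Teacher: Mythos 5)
Your argument is correct in both directions, and the forward direction is essentially the paper's: reduce freeness to Proposition~\ref{prop-cofibrant}, then test against the fibration $C^nM\to\Sg^{n+1}M$ and read off via Remark~\ref{rem-CS-hom} that $d\:A_{n+1}/dA_{n+2}\to A_n$ is a split monomorphism, whence $\ker(d\:A_{n+1}\to A_n)=dA_{n+2}$. The paper then works a little harder to produce the explicit splitting $A_*=Y_*\op dY_*$ and so concludes contractibility directly, whereas you stop at acyclicity and invoke the fact that a degreewise free acyclic complex is contractible; that is legitimate (it is Lemma~\ref{lem-free-complex}, or the nullhomotopy claim in the \emph{second} paragraph of the proof of Proposition~\ref{prop-cofibrant}, not the first). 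Where you genuinely diverge is the converse: the paper splits $A_*=Y_*\op Z_*$ with $d$ an isomorphism $Y_n\to Z_{n-1}$ and identifies $\Ch(A_*,L_*)$ with $\Ab_*(Y_*,L_*)$, which preserves epimorphisms because $Y_*$ is degreewise projective; you instead take a contraction $s$ with $ds+sd=1$, write $g=d(gs)+(gs)d$, lift the graded map $gs$ through the surjection $q$ using freeness of each $A_n$, and note that $\tg=dh+hd$ is automatically a chain map with $q\tg=g$. Both are valid and of comparable length; the paper's version makes the representing object visibly projective, while your homotopy-transfer argument avoids choosing a splitting of the complex (needing only the contraction and degreewise projectivity) and is the form of the argument that transports most directly to other additive settings.
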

\begin{proof}
 First suppose that $\Ch(A_*,-)$ sends fibrations to epimorphisms.  By
 the previous proposition, $A_*$ is free in each degree.  The evident
 projection $C^nM\to\Sg^{n+1}M$ is a fibration, so the induced map
 \[ \Hom(A_n,M) = \Ch(A,C^nM) \to \Ch(A,\Sg^{n+1}M) =
     \Hom(A_{n+1}/dA_{n+2},M)
 \]
 is surjective.  This is just the map induced by $d$, so we see that
 the map $d\:A_{n+1}/dA_{n+2}\to A_n$ must be a split monomorphism.
 As $A_n$ is free it follows that $A_{n+1}/dA_{n+2}$ is free so the
 projection $A_{n+1}\to A_{n+1}/dA_{n+2}$ must split.  We can
 therefore find $Y_{n+1}\leq A_{n+1}$ such that
 $A_{n+1}=dA_{n+2}\oplus Y_{n+1}$ and $d\:Y_{n+1}\to A_n$ is a split
 monomorphism.  This means that $\ker(d\:A_{n+1}\to A_n)=dA_{n+2}$.
 After doing this for all $n$ we get a splitting $A_*=Y_*\oplus dY_*$
 such that $d\:Y_*\to dY_*$ is an isomorphism.  In other words, $A_*$
 is contractible.
 
 Conversely, suppose we start with the assumption that $A_*$ is
 degreewise free and contractible.  We then have
 $A_*=Y_*\op Z_*$, with the differential given by an isomorphism
 $Y_n\to Z_{n-1}$.  This gives an isomorphism
 $\Ch(A_*,L_*)=\Ab_*(Y_*,L_*)$ and $Y_*$ is projective in $\Ab_*$ so
 this functor preserves epimorphisms, as required. 
\end{proof}

\begin{proposition}\label{prop-lift-abelian}
 Let $\CA$ be an abelian category, and let $A\xra{i}B\xra{p}C$ and
 $K\xra{j}L\xra{q}M$ be short exact sequences.  For any diagram as
 shown,
 \[ \xymatrix{
  {A}
   \ar@{ >->}[d]_{i} 
   \rto^{f} &
  {L}
   \ar@{->>}[d]^{q} \\
  {B}
   \rto_{g} &
  {M}
  }
 \]
 we let $H(f,g)$ denote the set of maps $h\:B\xra{}L$ such that $qh=g$
 and $hi=f$.  Then there is a naturally defined extension
 $K\xra{}T(f,g)\xra{}C$ such that splittings of $T(f,g)$ biject with
 $H(f,g)$.  In particular, if $\Ext^1_{\CA}(C,K)=0$, then $H(f,g)$ is
 always nonempty, so $i$ has the left lifting property with respect to
 $q$. 
\end{proposition}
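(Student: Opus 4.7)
My plan is to realise $T(f,g)$ as a pullback-quotient so that sections of $T(f,g)\to C$ correspond tautologically to diagonal fillers of the given square.

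First I would let $P=B\times_M L$ be the pullback of $g$ and $q$, with projections $\pi_B\:P\to B$ and $\pi_L\:P\to L$.  Commutativity of the square gives a morphism $(i,f)\:A\to P$, which is monic because $i$ is; I set $\bar A$ to be its image and define $T(f,g):=P/\bar A$.

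Second, I would verify that $K\to T(f,g)\to C$ is short exact.  Being the pullback of the epi $q$ along $g$, the map $\pi_B$ is epi with kernel $K$ (embedded as $k\mapsto(0,j(k))$); since $i$ is monic, $K\cap\bar A=0$, and the kernel of $p\circ\pi_B\:P\to C$ is exactly $\bar A+K$, so quotienting by $\bar A$ leaves $K$ as the kernel of the induced epi $T(f,g)\to C$.

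Third, the heart of the proof is the bijection between $H(f,g)$ and sections of $T(f,g)\to C$.  In one direction, $h\in H(f,g)$ yields $B\to P$ by $b\mapsto(b,h(b))$ (well-defined because $qh=g$), and the condition $hi=f$ sends $A$ into $\bar A$, so the composite $B\to P\to T(f,g)$ descends to a section $s_h\:C\to T(f,g)$.  For the inverse, given a section $s$ I would form the pullback $Q=B\times_{T(f,g)}P$ along $s\circ p$; the two natural morphisms $Q\to B$ (the first projection, and $\pi_B$ applied to the $P$-factor) become equal after composing with $p$, so their difference factors as $i\circ\alpha$ for a unique $\alpha\:Q\to A$.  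The morphism $\beta\:Q\to P$ defined as the $P$-projection plus $(i,f)\circ\alpha$ then has first coordinate equal to the first projection $Q\to B$, and a short calculation shows $\beta$ vanishes on $\ker(Q\to B)\cong A$, so it descends along the epi $Q\to B$ to $\tilde s\:B\to P$; the map $h:=\pi_L\circ\tilde s$ is then the required lift, and one checks $s_h=s$.  Granted the bijection, the final claim is immediate: $\Ext^1_\CA(C,K)=0$ forces the extension $K\to T(f,g)\to C$ to split, producing a section and hence an element of $H(f,g)$, so $i$ has the LLP against $q$.

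The main obstacle will be the inverse half of the bijection: one cannot just ``lift $s(c)\in P/\bar A$ to $P$ with prescribed first coordinate'' in a general abelian category, so the auxiliary pullback $Q$ with the correction term $(i,f)\circ\alpha$ is essential, and the crucial vanishing of $\beta$ on $\ker(Q\to B)$ must be verified as a morphism identity arising from the defining property of $(i,f)$ rather than by element-chasing.
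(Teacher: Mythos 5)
Your construction is correct and is essentially the paper's own: your $T(f,g)=(B\times_M L)/\img((i,f))$ is exactly the paper's $T=\cok(\ti\colon A\to Z)$, since the paper's group of cycles $Z=\ker((-g,q)\colon B\oplus L\to M)$ is precisely the pullback of $q$ and $g$, and the long exact homology sequence the paper uses is just a packaging of the direct verification you give that $K\to T(f,g)\to C$ is exact. The only substantive difference is in the inverse half of the bijection, where the paper notes that the square formed by $Z\to T$, $Z\to B$, $T\to C$, $B\to C$ is a pullback and obtains the lift $\tn\colon B\to Z$ in one step, whereas you reconstruct that pullback property by hand via your auxiliary object $Q$ and the correction term $(i,f)\circ\alpha$; both routes are valid.
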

\begin{proof}
 Consider the following diagram:
 \[ \xymatrix{
  {0}
   \rto
   \dto &
  {A}
   \ar@{=}[r]
   \ar@{ >->}[d]^{\bsm i\\ f\esm} &
  {A}
   \ar@{ >->}[d]^{i} \\
  {L}
   \ar@{ >->}[r]^{\bsm 0\\ 1\esm}
   \ar@{->>}[d]_{q} &
  {B\op L}
   \ar@{->>}[r]^{\bsm 1 & 0\esm} 
   \ar@{->>}[d]^{\bsm -g & q\esm} &
  {B}
   \dto \\
  {M}
   \ar@{=}[r] &
  {M}
   \rto &
  {0}
  }
 \]
 Each column can be regarded as a complex, so the whole diagram is a
 short exact sequence of complexes, leading to a long exact sequence
 of homology groups.  This long exact sequence has only three nonzero
 terms, so it gives a short exact sequence of the form
 $K\xra{k}T\xra{r}C$, where $T=T(f,g)$ is the unique homology group of
 the middle column.  

 If $h\in H(f,g)$, then consider the following diagram:
 \[ \xymatrix{
  {0}
   \dto &
  {A}
   \lto
   \ar@{ >->}[d]^{\bsm i\\ f\esm} &
  {A}
   \ar@{=}[l]
   \ar@{ >->}[d]^{i} \\
  {L}
   \ar@{->>}[d]_{q} &
  {B\op L}
   \ar@{->>}[l]_{\bsm -h & 1\esm} 
   \ar@{->>}[d]^{\bsm -g & q\esm} &
  {B}
   \ar@{ >->}[l]_{\bsm 1\\ h\esm}
   \dto \\
  {M} &
  {M}
   \ar@{=}[l] &
  {0}
   \lto
  }
 \]
 The columns are the same complexes as before, and the horizontal maps
 give a splitting of our previous short exact sequence of complexes,
 and so induce a splitting of the homology group $T$.  

 For the opposite correspondence, we need more information about $T$.
 Let $Z$ be the corresponding group of cycles, which is the kernel of
 the map $(-g,q)\:B\op L\xra{}M$, or in other words, the pullback of
 $q$ and $g$.  We name the maps in the pullback square as follows:
 \[ \xymatrix{
  {Z}
   \rto^{\tg}
   \ar@{->>}[d]_{\tq} &
  {L}
   \ar@{->>}[d]^{q}  \\
  {B}
   \rto_{f} &
  {M}
  }
 \]
 Thus $\tg$ and $\tq$ are just the projections $B\op L\xra{}L$ and
 $B\op L\xra{}B$, restricted to $Z$.

 The differential in our complex is the map $\ti:=(i,f)\:A\xra{}Z$, so
 $T$ is by definition the cokernel of $\ti$; we write $\tp\:Z\xra{}T$
 for the quotient map.  We write $\tk:=(0,j)\:K\xra{}Z$.  One checks
 that the following diagram 
 commutes:
 \[ \xymatrix{
  & {K}
   \ar@{=}[r]
   \ar@{ >->}[d]_{\tk} &
  {K}
   \ar@{ >->}[d]^{k} \\
  {A}
   \ar@{=}[d]
   \ar@{ >->}[r]^{\ti} &
  {Z}
   \ar@{->>}[r]^{\tp}
   \ar@{->>}[d]_{\tq} &
  {T}
   \ar@{->>}[d]^{r}  \\
  {A}
   \ar@{ >->}[r]_{i} &
  {B}
   \ar@{->>}[r]_{p} &
  {C}
  }
 \]
 We also see (by inspection of definitions and diagram chasing) that
 all rows and columns are exact, and that the bottom right square is a
 pullback. 

 Now suppose we are given a splitting of the sequence
 $K\xra{k}T\xra{r}C$, given by a map $n\:C\xra{}T$ with $rn=1$.  By
 the pullback property, there is a unique map $\tn\:B\xra{}Z$ with
 $\tq\tn=1_B$ and $\tp\tn=np\:B\xra{}T$.  We claim that the map
 $h:=\tg\tn\:B\xra{}L$ lies in $H(f,g)$.  Indeed, we first have
 $qh=q\tg\tn=g\tq\tn=g$.  Next, one checks that $\tp(\ti-\tn i)=0$ and 
 $\tq(\ti-\tn i)=0$ so the pullback property tells us that
 $\tn i=\ti\:A\xra{}Z$.  This gives $hi=\tg\ti=f$ as required.

 We leave it to the reader to check that the above constructions are
 mutually inverse.
\end{proof}

\begin{proposition}\label{prop-lift-i}
 A map $i\:A_*\to B_*$ has the left lifting property with respect to
 acyclic fibrations iff $i$ is a cofibration.
\end{proposition}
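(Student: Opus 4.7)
The plan is to establish the two implications separately, leveraging the functorial factorisation of Construction~\ref{cons-cof-afb} in one direction and the abstract lifting criterion of Proposition~\ref{prop-lift-abelian} in the other.

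For the ``if'' direction, suppose $i\:A_*\to B_*$ is a cofibration, so it fits into a short exact sequence $A_*\xra{i}B_*\to C_*$ with $C_*$ free in each degree, and let $q\:L_*\to M_*$ be any acyclic fibration, fitting into $K_*\to L_*\xra{q}M_*$ with $K_*$ acyclic. Given a commutative square of the usual form, I would apply Proposition~\ref{prop-lift-abelian} in the abelian category $\Ch$: this reduces the existence of a lift $B_*\to L_*$ to splitting a naturally defined extension $K_*\to T\to C_*$ in $\Ch$. Corollary~\ref{cor-split-ses} asserts that every such extension, with acyclic kernel and degreewise-free quotient, splits in $\Ch$; this immediately produces the lift.

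For the ``only if'' direction, suppose $i$ has the left lifting property against all acyclic fibrations. I would apply Construction~\ref{cons-cof-afb} to $i$ itself, factoring $i$ as $A_*\xra{\ti}X_*\xra{p}B_*$, where $\ti$ is injective with degreewise-free cokernel (directly from the construction) and $p$ is surjective and a quasiisomorphism by Proposition~\ref{prop-cof-afb-ii}, hence an acyclic fibration. Applying the lifting hypothesis to the square with $\ti$ on top and $\mathrm{id}_{B_*}$ on the bottom yields a map $h\:B_*\to X_*$ with $h i=\ti$ and $p h=1$. This exhibits $i$ as a retract of $\ti$ in the arrow category of $\Ch$; injectivity of $i$ is immediate from $hi=\ti$, and passing to cokernels gives a retraction $\cok(i)\to\cok(\ti)\to\cok(i)$ in $\Ch$, showing that each $\cok(i)_n$ is a direct summand of the free abelian group $\cok(\ti)_n$ and therefore free.

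Neither step requires much work beyond invoking the results already in hand. The only point that deserves care is checking that Proposition~\ref{prop-lift-abelian} applies cleanly in $\Ch$ (it is genuinely an abelian category, so this is straightforward) and that the retract diagram really lives in $\Ch$ rather than merely in graded abelian groups, so that passing to cokernels is valid. I do not anticipate any serious obstacle.
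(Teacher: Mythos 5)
Your proof is correct. The ``if'' direction is essentially identical to the paper's: reduce via Proposition~\ref{prop-lift-abelian} to splitting an extension $K_*\to T\to C_*$ with $K_*$ acyclic and $C_*$ degreewise free, and invoke Corollary~\ref{cor-split-ses}. The ``only if'' direction is where you genuinely diverge. The paper does not use the retract argument: it tests the lifting property against the specific acyclic fibrations $C^nM\to 0$, using $\Ch(A_*,C^nM)\simeq\Hom(A_n,M)$ to conclude that $i$ is a degreewise split monomorphism, and then observes that $0\to\cok(i)$ is a pushout of $i$, hence inherits the lifting property, so that Proposition~\ref{prop-cofibrant} forces $\cok(i)$ to be degreewise free. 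Your route instead factors $i$ through Construction~\ref{cons-cof-afb} and runs the standard retract argument, concluding because the class of monomorphisms with degreewise free cokernel is closed under retracts (each $\cok(i)_n$ becomes a direct summand of a free abelian group, hence free since subgroups of free abelian groups are free --- the same fact the paper relies on throughout). Both arguments are sound and non-circular; the needed inputs (Propositions~\ref{prop-cof-afb-i}, \ref{prop-cof-afb-ii}, \ref{prop-cofibrant}) are all established before this point. Your version is the generic model-category move and is arguably slicker, at the cost of invoking the full factorisation machinery; the paper's version is more hands-on and identifies directly, via small test objects, exactly what the lifting property forces, which fits its stated aim of keeping everything concrete and explicit. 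The two points you flag as needing care (that $\Ch$ is abelian, and that the retract diagram lives in $\Ch$ so that passing to cokernels is legitimate) are indeed the right ones, and both check out.
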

\begin{proof}
 First suppose that $i$ has the lifting property.  We apply this to
 the acyclic fibration $C^nM\to 0$, remembering that
 $\Ch(A_*,C^nM)=\Hom(A_n,M)$.  We see that for every map $u\:A_n\to M$
 there exists $v\:B_n\to M$ with $vi=u$.  By taking $u$ to be the
 identity map, we see that $i$ is a degreewise split monomorphism,
 with cokernel $C_*$ say.  The map $0\to C_*$ is a pushout of $i$, so
 it has left lifting for acyclic fibrations, which means precisely
 that $\Ch(C_*,-)$ sends acyclic fibrations to epimorphisms.  Thus,
 Proposition~\ref{prop-cofibrant} tells us that $C_*$ is degreewise
 free, so $i$ is a cofibration.

 Conversely, suppose that $i$ is a cofibration, with cokernel $C_*$
 say.  Let $q\:L_*\to M_*$ be an acyclic fibration, so $q$ is
 surjective and the kernel $K_*$ is acyclic.  By
 Proposition~\ref{prop-lift-abelian}, it will suffice to show that
 every short exact sequence $K_*\to T_*\to C_*$ is split, but this
 follows directly from Corollary~\ref{cor-split-ses}.
\end{proof}

\begin{proposition}\label{prop-lift-ii}
 A map $i\:A_*\to B_*$ has the left lifting property with respect to
 all fibrations iff $i$ is an acyclic cofibration.
\end{proposition}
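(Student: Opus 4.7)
The plan is to mirror the proof of Proposition~\ref{prop-lift-i}, swapping in the contractibility criterion from Proposition~\ref{prop-acyclic-cofibrant} where the freeness criterion of Proposition~\ref{prop-cofibrant} was used.

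For the forward direction, suppose $i$ has the left lifting property with respect to all fibrations. Since every acyclic fibration is a fibration, Proposition~\ref{prop-lift-i} immediately gives that $i$ is a cofibration; in particular $i$ is a degreewise split monomorphism with cokernel $C_*$ that is free in each degree. The map $0\to C_*$ is a pushout of $i$, so it inherits the left lifting property with respect to fibrations, which says exactly that $\Ch(C_*,-)$ converts fibrations to epimorphisms. By Proposition~\ref{prop-acyclic-cofibrant} we conclude that $C_*$ is contractible, hence acyclic. The long exact sequence associated to $A_*\to B_*\to C_*$ then shows $i_*\:H_*A\to H_*B$ is an isomorphism, so $i$ is a weak equivalence and thus an acyclic cofibration.

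For the converse, suppose $i$ is an acyclic cofibration with cokernel $C_*$, which is then free in each degree and acyclic (hence contractible by Lemma~\ref{lem-free-complex}). Let $q\:L_*\to M_*$ be an arbitrary fibration, with kernel $K_*$. By Proposition~\ref{prop-lift-abelian} it suffices to show that every short exact sequence of chain complexes $K_*\to T_*\to C_*$ splits. The surjection $T_*\to C_*$ is a fibration, and because $C_*$ is degreewise free and contractible, Proposition~\ref{prop-acyclic-cofibrant} tells us that $\Ch(C_*,T_*)\to\Ch(C_*,C_*)$ is surjective; lifting the identity $1_{C_*}$ gives the required splitting.

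The only subtle step is recognising that contractibility of $C_*$ is precisely the hypothesis needed to invoke Proposition~\ref{prop-acyclic-cofibrant} on the last step, and this is guaranteed by Lemma~\ref{lem-free-complex} once we know $C_*$ is free and acyclic. Everything else is bookkeeping built directly on the results already established.
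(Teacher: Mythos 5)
Your proof is correct and follows essentially the same route as the paper: use Proposition~\ref{prop-lift-i} plus the pushout $0\to C_*$ and Proposition~\ref{prop-acyclic-cofibrant} for the forward direction, and Proposition~\ref{prop-lift-abelian} together with the splitting of extensions $K_*\to T_*\to C_*$ for the converse. The extra details you supply (the long exact sequence showing $i$ is a quasiisomorphism, and the explicit lifting of $1_{C_*}$ to split $T_*\to C_*$) are exactly what the paper leaves implicit.
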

\begin{proof}
 First suppose that $i$ has the lifting property.  The previous result
 tells us that $i$ is a cofibration, with cokernel $C_*$ say.  The map
 $0\to C_*$ is a pushout of $i$ and so again has the lifting property,
 which means (by Proposition~\ref{prop-acyclic-cofibrant}) that $C_*$
 is contractible, so $i$ is an acyclic cofibration.

 Conversely, suppose that $i$ is an acyclic cofibration, and again
 write $C_*$ for the cokernel.  We see from
 Proposition~\ref{prop-acyclic-cofibrant} that every short exact
 sequence $K_*\to T_*\to C_*$ is split, and it follows using
 Proposition~\ref{prop-lift-abelian} that $i$ has left lifting for all
 fibrations. 
\end{proof}

\begin{corollary}
 $\Ch$ is a model category.
\end{corollary}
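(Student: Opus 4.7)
\bigskip

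\noindent\textbf{Proof proposal.}
The plan is to verify the standard closed model category axioms and observe that the substantive work has already been carried out in the preceding propositions. I would dispatch the factorization axiom by citing Construction~\ref{cons-acf-fib} together with Proposition~\ref{prop-acf-fib} (yielding the acyclic-cofibration/fibration factorization) and Construction~\ref{cons-cof-afb} together with Propositions~\ref{prop-cof-afb-i} and~\ref{prop-cof-afb-ii} (yielding the cofibration/acyclic-fibration factorization); both constructions are visibly functorial in $f$. The lifting axiom is then immediate from Propositions~\ref{prop-lift-i} and~\ref{prop-lift-ii}.

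The remaining axioms are routine bookkeeping. First I would note that $\Ch$ is bicomplete because limits and colimits of chain complexes are formed degreewise from those in $\Ab$. Next, the two-out-of-three property for weak equivalences reduces instantly to the corresponding property for isomorphisms of abelian groups by applying the homology functor $H_*$.

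For the retract axiom, I would treat each of the three classes separately. Fibrations are retract-closed because surjections are retract-closed in any category. Weak equivalences are retract-closed because a retract of an isomorphism in the additive category $\Ab$ is itself an isomorphism, applied degreewise to $H_*$. Cofibrations require one nontrivial input: degreewise injectivity is clearly preserved by retracts, while a degreewise retract of a free abelian group is projective and hence, by the classical theorem that projective abelian groups are free, is itself free. This last step is the only mildly delicate ingredient, and it is the same structural fact about $\Ab$ that has been used throughout the note; beyond it I do not anticipate any real obstacle, since all of the hard content of the corollary has been absorbed into the constructions and propositions preceding it.
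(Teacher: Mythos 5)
Your proposal is correct and follows the same route as the paper: factorizations from Constructions~\ref{cons-acf-fib} and~\ref{cons-cof-afb}, lifting from Propositions~\ref{prop-lift-i} and~\ref{prop-lift-ii}, and the remaining axioms checked directly (the paper simply declares these "clear," whereas you spell out the retract argument, correctly using that a direct summand of a free abelian group is free).
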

\begin{proof}
 It is clear that $\Ch$ has finite limits and colimits, that the
 classes of fibrations, cofibrations and weak equivalences are closed
 under retracts, and that the weak equivalences satisfy the
 two-out-of-three condition.  Functorial factorisations are provided
 by constructions~\ref{cons-acf-fib} and~\ref{cons-cof-afb}.   The
 lifting axioms are satisfied by Proposition~\ref{prop-lift-i}
 and~\ref{prop-lift-ii}. 
\end{proof}

\begin{definition}\label{defn-IJ}
 Let $i_n$ denote the map $0\to C^n\Z$, and let $j_n$ denote the
 evident inclusion $\Sg^n\Z\to C^n\Z$.  Put $\CI=\{i_n\st n\in\Z\}$
 and $\CJ=\{j_n\st n\in\Z\}$.
\end{definition}

\begin{proposition}\label{prop-cg}
 A morphism $f\:A\to B$ in $\Ch$ is a fibration iff it has the right
 lifting property with respect to $\CI$.  Similarly, $f$ is an acyclic
 fibration iff it has the right lifting property with respect to
 $\CI\cup\CJ$.  This, the model structure on $\Ch$ is cofibrantly
 generated (as in~\cite[Section 2.1.3]{ho:mc}).
\end{proposition}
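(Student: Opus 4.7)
The plan is to translate both right-lifting conditions into concrete conditions on $f$ by applying the hom-identifications of Remark~\ref{rem-CS-hom}, and then to observe that the smallness hypotheses needed to invoke~\cite[Section~2.1.3]{ho:mc} are automatic here.

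First I would handle the $\CI$ characterization. Under the bijection $\Ch(C^n\Z, B_*)\simeq B_{n+1}$, a commutative square with $i_n\:0\to C^n\Z$ on the left is just the choice of an element $b\in B_{n+1}$, while a lift is an element $a\in A_{n+1}$ with $f(a)=b$. Hence $f$ has RLP with respect to $\CI$ iff $f$ is degreewise surjective, which is the definition of a fibration.

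Next, assuming the first part, I would show that RLP against $\CI\cup\CJ$ coincides with being an acyclic fibration. Granted surjectivity, let $K_*=\ker(f)$. A lifting square against $j_n\:\Sg^n\Z\to C^n\Z$ corresponds, via Remark~\ref{rem-CS-hom}, to a cycle $a\in A_n$ and an element $b\in B_{n+1}$ with $db=f(a)$, and a lift is an element $a'\in A_{n+1}$ with $da'=a$ and $f(a')=b$. Using surjectivity, pick any $a_0'\in A_{n+1}$ with $f(a_0')=b$; then $a-da_0'$ is a cycle in $K_n$, and a lift $a'$ exists iff $a-da_0'$ bounds in $K_*$. Conversely every cycle $z\in K_n$ arises this way (take $b=0$ and $a_0'=0$), so RLP against $\CJ$ (on top of surjectivity) is equivalent to $K_*$ being acyclic. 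Combined with the first step, $f$ has RLP against $\CI\cup\CJ$ iff $f$ is an acyclic fibration.

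Finally, for the cofibrantly generated conclusion, I would verify the smallness requirement on the domains of $\CI$ and $\CJ$, namely $0$ and $\Sg^n\Z$. Both are finitely presentable in $\Ch$: $\Ch(\Sg^n\Z,-)$ is the $n$-cycles functor, and kernels in $\Ab$ commute with filtered colimits, so the same is true pointwise in $\Ch$. Thus the smallness hypotheses are trivially met, and the lifting characterizations finish the argument. No serious obstacle arises; the only care needed is in the $\CJ$-step, where one must use the chosen preimage $a_0'$ to reduce the lifting problem to a cycle-bounding question inside $K_*$.
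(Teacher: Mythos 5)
Your proof is correct, and the first step (the $\CI$ characterization) and the smallness check are the same as the paper's. Where you diverge is the $\CI\cup\CJ$ step. The paper splits it into two unequal halves: the ``acyclic fibration implies RLP'' direction is dispatched by observing that $i_n$ and $j_n$ are cofibrations and invoking Proposition~\ref{prop-lift-i}, while the converse is proved by showing directly that the RLP forces $f_*\:H_*A\to H_*B$ to be injective and surjective (the surjectivity argument being the slightly fiddly one with $a^*-a'$). You instead fix the kernel $K_*$ and show that, granted surjectivity, the RLP against $j_n$ is \emph{equivalent} to acyclicity of $K_*$, by reducing each lifting problem to the question of whether the cycle $a-da_0'\in Z_nK$ bounds in $K_*$, and noting that every cycle of $K_*$ arises from such a problem. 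This buys you a single self-contained biconditional that never appeals to Proposition~\ref{prop-lift-i}, at the mild cost of leaning on the ``epimorphism with acyclic kernel'' reformulation of acyclic fibrations from Definition~\ref{defn-main} (which the paper also asserts there, so this is legitimate). Both arguments are sound; yours is arguably tidier for this proposition, while the paper's reuse of the cofibration machinery is in keeping with its overall structure.
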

\begin{proof}
 From Remark~\ref{rem-CS-hom} we get $\Ch(C^n\Z,A)=A_{n+1}$.  It
 follows that $f$ has the RLP with respect to $i_n$ iff
 $f_{n+1}\:A_{n+1}\to B_{n+1}$ is surjective, and thus that $f$ has
 the RLP with respect to $\CI$ iff $f$ is surjective iff $f$ is a
 fibration.

 Next, it is clear that the maps $i_n$ and $j_n$ are cofibrations, so
 that every acyclic fibration has the RLP with respect to
 $\CI\cup\CJ$.

 Conversely, suppose that $f$ has the RLP with respect to
 $\CI\cup\CJ$.  By the first part of this proof, we see that $f$ is
 surjective, with kernel $K$ say.  From Remark~\ref{rem-CS-hom} again
 we see that $\Ch(\Sg^n\Z,A)=\ker(d\:A_n\to A_{n-1})=Z_nA$.  Using
 this, the RLP for $\CJ$ translates as follows: given $a\in Z_nA$ and
 $b'\in B_{n+1}$ with $db'=fa$, there exists $a'\in A_{n+1}$ with
 $da'=a$ and $fa'=b'$.  This easily implies that $f_*\:H_*A\to H_*B$
 is injective.  We claim that it is also surjective.  To see this,
 consider a homology class $[b^*]\in H_{n+1}B$, so $db^*=0$.  As $f$
 is surjective, we can choose $a^*\in A_{n+1}$ with $fa^*=b^*$, and
 then take $a=da^*\in Z_nA$.  We then have $fa=db^*=0=d0$, so we can
 apply the RLP to the pair $(a,0)$ to get an element $a'\in A_{n+1}$
 with $da'=a$ and $fa'=0$.  The element $a^*-a'$ is then a cycle and
 we have $f_*[a^*-a']=[b^*]$ as required.  This shows that $f$ is an
 acyclic fibration.

 It is clear that for $T=0$ or $T=C^n\Z$ or $T=\Sg^n\Z$, the functor
 $\Ch(T,-)$ preserves all filtered colimits, so these objects are
 small in the strongest possible sense.  
\end{proof}

\begin{proposition}\label{prop-proper}
 The pushout of any weak equivalence along any cofibration is still a
 weak equivalence, as is the pullback of any weak equivalence along
 any fibration.  Thus, the model structure is both left and right
 proper.
\end{proposition}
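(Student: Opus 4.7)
The plan is to prove both parts by comparing long exact sequences in homology via the five lemma. The key ingredient is that $\Ch$ is an abelian category (with limits and colimits computed degreewise), so pullbacks of epimorphisms and pushouts of monomorphisms both preserve short-exact structure.

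For right properness, I will start with a pullback square in which $p\:B_*\to C_*$ is a fibration (hence surjective) and $w\:A_*\to C_*$ is a weak equivalence, forming $P_*=A_*\times_{C_*}B_*$ with projections $\pi_A$ and $\pi_B$. Setting $K_*=\ker(p)$, the degreewise pullback formula makes $\pi_A$ surjective with kernel $K_*$ (sitting inside $P_*$ as $0\times K_*$), yielding a map of short exact sequences from $K_*\to P_*\to A_*$ to $K_*\to B_*\to C_*$ with vertical maps $1$, $\pi_B$, and $w$. Passing to the induced long exact sequences in homology and invoking the five lemma, using that $w_*$ is an isomorphism, will show that $\pi_B$ is a quasiisomorphism.

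For left properness I will work dually. Given a cofibration $i\:A_*\to D_*$ with cokernel $C_*$ and a weak equivalence $w\:A_*\to B_*$, I will form the pushout $Q_*$ and observe that in the abelian category $\Ch$ the induced map $B_*\to Q_*$ is a monomorphism whose cokernel is canonically identified with $C_*$. The resulting map of short exact sequences $A_*\to D_*\to C_*$ and $B_*\to Q_*\to C_*$ (with $w$, $D_*\to Q_*$, and $1$ as the verticals) gives a map of long exact sequences in homology, and the five lemma again forces $D_*\to Q_*$ to be a quasiisomorphism.

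I do not expect any substantial obstacle beyond the routine abelian-category verifications that the pullback and pushout really do produce these matching short exact sequences with the claimed identifications of kernels and cokernels; these reduce to standard computations in $\Ab$ applied degreewise. It is worth noting that the freeness clause in the definition of cofibration plays no role in this argument, so in fact left properness holds for pushouts along arbitrary monomorphisms.
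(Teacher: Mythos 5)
Your proof is correct and follows essentially the same route as the paper: identify the pushout (resp.\ pullback) square with a map of short exact sequences having an isomorphism on the cokernel (resp.\ kernel), then apply the five lemma to the resulting long exact sequences in homology. The only difference is that you write out the right properness case explicitly (via the kernel of the fibration), where the paper simply declares it ``similar,'' and your closing observation that freeness of the cokernel is not needed for left properness is accurate.
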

\begin{proof}
 Suppose we have a pushout square as follows, in which $i$ is a
 cofibration and $f$ is a weak equivalence.
 \[ \xymatrix{
   A_* \ar@{ >->}[r]^i \ar[d]_f & B_* \ar[d]^g \\
   C_* \ar@{ >->}[r]_j & D_*
 } \]
 We must show that $g$ is also a weak equivalence.  From the pushout
 property it is standard and easy to check that $j$ is also a
 cofibration and that the induced map $h\:\cok(i)\to\cok(j)$ is an
 isomorphism.  We therefore get a morphism of short exact sequences of
 chain complexes, and thus a morphism of long exact sequences of
 homology groups:
 \[ \xymatrix{
   H_{n+1}\cok(i) \ar[r]^\dl \ar[d]_{h_*}^\simeq &
   H_nA \ar[r]^{i_*} \ar[d]_{f_*}^\simeq &
   H_nB \ar[r] \ar[d]^{g_*} &
   H_n\cok(i) \ar[r]^\dl \ar[d]_\simeq^{h_*} &
   H_{n-1}A \ar[d]_\simeq^{f_*}   \\
   H_{n+1}\cok(j) \ar[r]_\dl &
   H_nC \ar[r]_{j_*} &
   H_nD \ar[r] &
   H_n\cok(j) \ar[r]_\dl &
   H_{n-1}C
 } \]
 The five lemma shows that $g_*$ is an isomorphism, as required.  This
 proves left properness, and the proof for right properness is
 similar. 
\end{proof}

\begin{proposition}\label{prop-monoidal}
 If $i\:A_*\to B_*$ and $j\:C_*\to D_*$ are cofibrations, and $P_*$ is
 the pushout of
 \[ B_*\ot C_* \xla{i\ot 1} A_*\ot C_* \xra{1\ot j} A_*\ot B_*, \]
 then the induced map $k\:P_*\to B_*\ot D_*$ is again a cofibration.
 Moreover, if at least one of $i$ and $j$ is acyclic, then $k$ is also
 acyclic.  In other words, the pushout product axiom is satisfied, so
 we have a monoidal model structure.
\end{proposition}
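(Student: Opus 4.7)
The plan is to analyze $k$ via the cokernels $X_* := \cok(i)$ and $Y_* := \cok(j)$, both of which are degreewise free by hypothesis. Since each $X_n$ is free and hence projective, the short exact sequence $0\to A_n\to B_n\to X_n\to 0$ splits in $\Ab$, so I can choose graded-group (not chain-complex) splittings $B_n\cong A_n\op X_n$ and $D_m\cong C_m\op Y_m$. In each degree this gives
\[ (B_*\ot D_*)_n \;=\; \bigoplus_{p+q=n}(A_p\op X_p)\ot(C_q\op Y_q), \]
a direct sum of four pieces. The three pieces not equal to $X\ot Y$ together span precisely $(B_*\ot C_*)_n + (A_*\ot D_*)_n$, and a short diagram chase identifies this subgroup with the image of $P_n$ and shows that $k\:P_*\to B_*\ot D_*$ is injective. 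The differential on $B_*\ot D_*$ visibly descends to the usual tensor-product differential on the quotient, so $\cok(k)\cong X_*\ot Y_*$ as chain complexes.

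For the first assertion, note that a tensor product of two free abelian groups is free, so $(X_*\ot Y_*)_n=\bigoplus_{p+q=n}X_p\ot Y_q$ is free in each degree. Hence $k$ is a cofibration. For the second assertion, suppose without loss of generality that $i$ is acyclic, so $X_*$ is degreewise free and acyclic; by Lemma~\ref{lem-free-complex} it is then contractible. Choosing a chain contraction $s$ with $ds+sd=1_{X_*}$, the map $s\ot 1_{Y_*}$ is a chain contraction of $X_*\ot Y_*$ (the sign check reduces to $(-1)^{|sx|}+(-1)^{|x|}=0$), so $X_*\ot Y_*$ is contractible and in particular acyclic. Therefore $k$ is an acyclic cofibration.

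The main obstacle is the bookkeeping in the first paragraph: identifying the pushout $P_*$ cleanly as a subgroup of $B_*\ot D_*$ in each degree, and confirming that the cokernel really is $X_*\ot Y_*$ as a chain complex rather than merely as a graded group. Once that identification is in place the remaining assertions---degreewise freeness of $X_*\ot Y_*$ and the tensor contraction---are both very short.
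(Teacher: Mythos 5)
Your proposal is correct and follows essentially the same route as the paper: split $B_*$ and $D_*$ degreewise using the freeness of the cokernels, identify $P_*$ with three of the four summands of $B_*\ot D_*$ so that $\cok(k)\cong\cok(i)\ot\cok(j)$, and invoke Lemma~\ref{lem-free-complex} together with the tensor contraction for the acyclic case. The only cosmetic difference is that the paper obtains the chain-level identification of $\cok(k)$ by observing that $p\ot q$ induces a chain map $\cok(k)\to U_*\ot V_*$ and then checking it is an isomorphism via the graded splitting, whereas you verify directly that the differential descends; both are fine.
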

\begin{proof}
 Let $p\:B_*\to U_*$ and $q\:D_*\to V_*$ be the cokernels of $i$ and
 $j$ (so $U_*$ and $V_*$ are free in each degree).  We then find that
 the map $p\ot q\:B_*\ot D_*\to U_*\ot V_*$ gives zero when composed
 with $i\ot 1$ or $1\ot j$, so it is also zero when composed with $k$,
 so it induces a chain map $m\:\cok(k)\to U_*\ot V_*$.

 Next, in the category of graded abelian groups, we then have
 splittings $B_*=A_*\oplus U_*$ and $D_*=C_*\oplus V_*$.  From this we
 obtain splittings
 \begin{align*}
  P_* &= (A_*\ot C_*)\oplus(U_*\ot C_*) \oplus (A_*\ot V_*) \\ 
  B_*\ot D_* &= (A_*\ot C_*)\oplus(U_*\ot C_*) \oplus
                (A_*\ot V_*) \oplus (U_*\ot V_*), 
 \end{align*}
 with respect to which $k$ is just the inclusion of the first three
 summands.  This shows that $k$ is injective and that
 $m\:\cok(k)\to U_*\ot V_*$ is an isomorphism.  In particular, the
 cokernel of $k$ is free in each degree, so $k$ is a cofibration.  If
 $i$ is an acyclic cofibration then $U_*$ is contractible (by
 Lemma~\ref{lem-free-complex}) so $U_*\ot V_*$ is also contractible so
 $k$ is also an acyclic cofibration.  The same holds if $j$ is an
 acyclic cofibration.
\end{proof}

\begin{remark}\label{rem-ChR}
 Let $R$ be a ring that is not hereditary (so submodules of free
 modules need not be free).  We then cannot use the methods of this
 note to produce a model structure on the category $\Ch_R$ of chain
 complexes of modules over $R$.  However, we could in principle take
 $R$ as a monoid object in $\Ch$, and regard $\Ch_R$ as the category
 of modules for this monoid.  Then the general framework
 of~\cite[Theorem 4.1]{scsh:amm} would give a model structure on
 $\Ch_R$.  
\end{remark}

\begin{bibdiv}
\begin{biblist}
\bib{ho:mc}{article}{
  author={Hovey, Mark},
  title={Model category structures on chain complexes of sheaves},
  date={2001},
  issn={0002-9947},
  journal={Trans. Amer. Math. Soc.},
  volume={353},
  number={6},
  pages={2441\ndash 2457 (electronic)},
  review={\MR {2002a:18015}},
}

\bib{scsh:amm}{article}{
  author={Schwede, Stefan},
  author={Shipley, Brooke E.},
  title={Algebras and modules in monoidal model categories},
  journal={Proc. London Math. Soc. (3)},
  volume={80},
  date={2000},
  number={2},
  pages={491\ndash 511},
  issn={0024-6115},
  review={MR1734325 (2001c:18006)},
}
\end{biblist}
\end{bibdiv}

\end{document}